\newcommand{\essinf}{\inf^*\!}
\newcommand\w{\omega}
\newcommand{\IN}{\mathbb N}
\newcommand{\cl}{\mathrm{cl}}
\newtheorem{theorem}{Theorem}
\newtheorem{problem}{Problem}
\newtheorem{lemma}{Lemma}
\newtheorem{corollary}{Corollary}
\newtheorem{claim}{Claim}
\theoremstyle{definition}
\newtheorem{remark}{Remark}
\title{On images of complete topologized subsemilattices in sequential  semitopological  semilattices}
\author{Taras Banakh, Serhi\u\i\ Bardyla}
\address{T.Banakh: Ivan Franko National University of Lviv (Ukraine) and Jan Kochanowski University in Kielce (Poland)}
\email{t.o.banakh@gmail.com}
\address{S.~Bardyla: Institute of Mathematics, Kurt G\"{o}del Research Center, Vienna (Austria)}
\thanks{The second author was supported by the Austrian Science Fund FWF (Grant  I 3709-N35).}
\email{sbardyla@yahoo.com}
\begin{document}
\begin{abstract} A topologized semilattice $X$ is called {\em complete} if each non-empty chain $C\subset X$ has $\inf C\in\bar C$ and $\sup C\in\bar C$. 
We prove that for any continuous homomorphism $h:X\to Y$ from a complete topologized semilattice $X$  to a sequential Hausdorff semitopological semilattice $Y$ the image $h(X)$ is closed in $Y$.
\end{abstract}
\subjclass{06B30, 06B35, 54D55}
\keywords{sequential space, complete semitopological semilattice}
\maketitle

This paper is a continuation of the investigations \cite{BBm}, \cite{BBc}, \cite{BBw} of complete topologized semilattices.

A {\em semilattice} is any commutative semigroup of idempotents (an element $x$ of a semigroup is called an {\em idempotent} if $xx=x$). 

A semilattice endowed with a topology is called a {\em topologized semilattice}. 
A topologized semilattice $X$ is called a ({\em semi}){\em topological semilattice} if the semigroup operation $X\times  X\to X$, $(x,y)\mapsto xy$, is (separately) continuous.

Each semilattice carries a natural partial order $\le$ defined by $x\le y$ iff $xy=x=yx$. Endowed with this partial order, the semilattice is a {\em poset}, i.e., partially ordered set. 
Many properties of a semilattice can be expressed in the language of this partial order. In particular, a subset $C$ of a semilattice (more generally, poset) $X$ is called a {\em chain} if any points $x,y\in C$ are comparable in the sense that $x\le y$ or $y\le x$. A poset $X$ is called {\em chain-finite} if each chain in $X$ is finite.


 

In \cite{Stepp1975} Stepp proved that for any homomorphism $h:X\to Y$ from a chain-finite semilattice to a Hausdorff topological semilattice $Y$ the image $h(X)$ is closed in $Y$. In \cite{BBm}, the authors improved this result of Stepp proving the following theorem.

\begin{theorem}\label{t:cf}  For any homomorphism $h:X\to Y$ from a chain-finite semilattice to a Hausdorff semitopological semilattice $Y$, the image $h(X)$ is closed in $Y$.
\end{theorem}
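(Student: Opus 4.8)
The plan is to split the argument into a purely algebraic reduction and a topological core. First I would replace $X$ by its image and show that $S:=h(X)$ is a \emph{chain-finite} subsemilattice of $Y$, so that it suffices to prove the following: every chain-finite subsemilattice $S$ of a Hausdorff semitopological semilattice is closed. That $S$ is a subsemilattice is clear, and a homomorphism is monotone, hence sends chains to chains. For chain-finiteness of $S$, suppose $S$ contained an infinite chain and extract from it a strictly monotone sequence $(c_n)$ with preimages $a_n\in X$, $h(a_n)=c_n$. If $(c_n)$ is decreasing, the partial meets $b_n=a_0a_1\cdots a_n$ form a strictly decreasing sequence in $X$ (they decrease, and since $h(b_n)=c_n$ they are pairwise distinct), contradicting chain-finiteness of $X$. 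If $(c_n)$ is increasing, then for each $k$ the descending sequence $a_k\ge a_ka_{k+1}\ge a_ka_{k+1}a_{k+2}\ge\cdots$ stabilizes, again by chain-finiteness, at a finite meet $b_k$ with $b_k\le a_m$ for all $m\ge k$ and $h(b_k)=c_k$; then $b_0<b_1<b_2<\cdots$ is a strictly increasing chain in $X$, again a contradiction. Thus $S$ is chain-finite.

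Next I would set up two tools. Topologically, for every $a\in Y$ the principal cones $\downarrow a=\{z\in Y: za=z\}$ and $\uparrow a=\{z\in Y: za=a\}$ are closed: the map $z\mapsto za$ is continuous, so $\downarrow a$ is its equalizer with the identity and $\uparrow a$ is the preimage of the closed point $a$, while $Y$ is Hausdorff. Algebraically I would note that every chain-finite subsemilattice has a least element, since it has minimal elements and any two of them, each having their product below both, must coincide. In the merely semitopological setting these closed cones are the available substitute for the closedness of the whole order $\{(x,y):xy=x\}$, which is at Stepp's disposal only under joint continuity.

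For the core, fix $y\in\overline S$ and a net $(s_\alpha)$ in $S$ with $s_\alpha\to y$, and exploit separate continuity: for fixed $t$ the map $z\mapsto zt$ is continuous, so $ys_\alpha\to y$ and $s_\alpha t\to yt$ for every $t$. Since $S$ is chain-finite, the downward-directed family of finite meets $\prod_{\alpha\in F}s_\alpha$ attains a least element $p\in S$, which is then a lower bound of the whole net; as each $s_\alpha\in\uparrow p$ and $\uparrow p$ is closed, also $p\le y$. Because $S\cap\uparrow p$ is a chain-finite subsemilattice containing the net, with least element $p$ and with $y$ in its closure, I may assume $S$ has a least element $p\le y$ and try to climb from $p$ up to $y$. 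The key local move uses the closed cones: whenever the net is frequently below a single maximal element $m$ of $S$, the closedness of $\downarrow m$ forces $y\le m$, reducing the problem to the finite interval between $p$ and $m$, on which an induction along the (finite) chains should deliver $y\in S$.

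The hard part is exactly the step where this local move fails, namely when $S$ has infinitely many maximal elements and the net is frequently below none of them, so that $y$ is approached through infinitely many incomparable branches. Here joint limits are not available, the order relation is not closed, and the finite-meet lower bound only yields $p\le y$ rather than $p=y$. The crux is to convert the information that finitely many of the $s_\alpha$ already meet down to $p$, together with the closed cones $\downarrow a$ and $\uparrow a$, into a contradiction, i.e.\ to reduce the infinitely-many-maximal-elements configuration to the finite one. This reduction, carried out by a careful well-founded induction in the chain-finite poset in which separate continuity is used to control the products $s_\alpha t$, is where the genuine work lies and where the semitopological hypothesis must substitute for Stepp's joint continuity.
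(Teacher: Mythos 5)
Your algebraic reduction is fine: the argument that $h(X)$ is a chain-finite subsemilattice of $Y$ is correct (both the decreasing and the increasing case), as are the observations that ${\uparrow}a$ and ${\downarrow}a$ are closed under separate continuity, that a chain-finite semilattice has a least element, and that the infimum $p$ of the net satisfies $p\le y$ because ${\uparrow}p$ is closed. Note, for what it is worth, that the paper itself does not prove Theorem~\ref{t:cf} --- it is quoted from \cite{BBm} --- so the only in-paper material to compare against is the machinery of Lemma~\ref{l:t2} and Theorem~\ref{t:t}, which is the $\mathfrak t$-parametrized descendant of the argument you would need.

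The proposal has a genuine gap, and you name it yourself: the entire content of the theorem is the case you defer to ``a careful well-founded induction \dots where the genuine work lies.'' Everything you actually prove is compatible with $y\notin S$; nothing you write rules out, say, an infinite antichain in $S$ accumulating at a point outside $S$. Two concrete problems. First, even your ``easy'' branch is not closed: chain-finiteness does not make the interval $[p,m]$ finite (an infinite antichain with a top and bottom adjoined is chain-finite), so ``induction along the finite chains'' on that interval is not an argument. Second, the missing step is not a routine reduction but the theorem itself, and the known way to do it is quite specific: fix a neighborhood $U$ of $y$ excluding a candidate infimum, and use separate continuity to build inductively points $x_0,x_1,\dots$ of $S$ near $y$ such that \emph{all} partial products $x_k\cdots x_ny$ stay in $U$ (this is exactly Claim~\ref{cl:new2} of the paper); chain-finiteness then forces the decreasing partial products to stabilize at some $c=\inf_k\{x_n\}_{n\ge k}$ with $cy=c\in\bar U$, producing a strictly larger infimum, and iterating produces a strictly increasing sequence in $S$, contradicting chain-finiteness. (In the non-sequential setting one organizes this as a well-founded induction on the poset $\{x\in S: y\in\overline{S\cap{\uparrow}x}\}$ rather than along a single net.) Without this construction --- in particular without the trick of keeping the products $x_k\cdots x_ny$, not just the points $x_k$, inside $U$ --- the separate-continuity hypothesis is never actually used where it is needed, and the proof is incomplete.
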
 

A topological counterpart of the notion of a chain-finite poset is the notion of a complete topologized poset. A {\em topologized poset} is a poset $(X,\le)$ endowed with a topology. A topologized poset $X$ is called {\em complete} if each chain $C\subset X$ has $\inf C$ and $\sup C$ that belong to the closure $\bar C$ of the chain $C$ in $X$.

Complete topologized semilattices were introduced in \cite{BBm} under the name {\em $k$-complete} topologized semilattices. But we prefer to call such topologized semilattice complete (taking into account the fundamental role of complete topologized semilattice in the theory of absolutely closed topologized semilattices, see  \cite{BBm}, \cite{BBc}, \cite{BBw}, \cite{BBo}). 
In \cite{BBm} the authors proved the following closedness property of complete topologized semilattices.

\begin{theorem}\label{t:kc} For any continuous homomorphism $h:X\to Y$ from a complete topologized semilattice $X$ to a Hausdorff topological semilattice $Y$ the image $h(X)$ is closed in $Y$.
\end{theorem}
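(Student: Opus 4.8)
The plan is to fix a point $y\in\overline{h(X)}$ and construct $x\in X$ with $h(x)=y$; as $y$ is arbitrary this yields $h(X)=\overline{h(X)}$. Since $Y$ is a Hausdorff \emph{topological} semilattice, its multiplication is jointly continuous, and I would use this in two places: the closure $\overline{h(X)}$ is again a subsemilattice of $Y$, and the partial order of $Y$ is closed, so that each set $\uparrow a=\{z:za=a\}$ and $\downarrow a=\{z:za=z\}$ is closed. Replacing $Y$ by the Hausdorff topological semilattice $\overline{h(X)}$, I may assume $h(X)$ is dense in $Y$ and must prove $h$ surjective. Throughout I would use that $h$ is a monotone homomorphism ($x\le x'\Rightarrow h(x)\le h(x')$ and $h(xx')=h(x)h(x')$); in particular the only products that return to $h(X)$ are products of members of $h(X)$, so every approximation of $y$ must be built from images of elements of $X$.

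The main idea is to recover $y$ as the image of an infimum of a chain, splitting completeness into its two clauses. First, using the supremum clause, I would find $x_0\in X$ with $h(x_0)\ge y$: take a maximal chain $C_0$ in $X$ whose image is cofinal in $\downarrow y\cap h(X)$, so that by completeness $\sup C_0\in\overline{C_0}\subseteq X$ and, by monotonicity, $h(\sup C_0)\ge\sup h(C_0)=y$; put $x_0=\sup C_0$. Fixing by density a net $(x_\beta)$ in $X$ with $h(x_\beta)\to y$, separate continuity gives $h(x_0x_\beta)=h(x_0)h(x_\beta)\to h(x_0)\,y=y$, so the elements $x_0x_\beta\le x_0$ approximate $y$ from within $\downarrow h(x_0)$. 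Second, using the infimum clause, I would organise these approximating elements into a maximal chain $C_1$ below $x_0$ that stays inside the closed set $W=h^{-1}(\uparrow y)$, set $x_*=\inf C_1\in\overline{C_1}\subseteq W$, and argue $h(x_*)=y$: since $C_1\subseteq W$ and $\uparrow y$ is closed we get $h(x_*)\ge y$, while $h(x_\beta)\to y$, the maximality of $C_1$, and the closedness of the order forbid $h(x_*)>y$. This mirrors the proof of Theorem~\ref{t:cf}: there the chains are finite, so these suprema and infima are attained as genuine greatest and least elements and the construction terminates automatically; completeness is exactly the device that lets the same two-step argument run with honest $\sup$ and $\inf$ taken in closures.

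The \emph{main obstacle} is the construction of the two monotone chains inside $X$ on which completeness is to be applied, and in particular the guarantee that $y$ is approximable from \emph{within} $\uparrow y$, i.e. $y\in\overline{h(W)}$, so that the descending chain $C_1$ can be kept inside $W$ and its infimum pinned to $y$ rather than allowed to slip strictly below. This cannot be arranged by the naive device of multiplying preimages together: even in the compact semilattice $\{0,1\}^{\IN}$ under coordinatewise minimum one can take elements whose images lie arbitrarily close to a point $y$ while their product collapses far below $y$, so products of near-$y$ preimages need not stay near $y$. Because the multiplication of $X$ is \emph{not} assumed continuous, one also cannot fall back on limits of arbitrary nets in $X$ or on closedness of principal ideals of $X$; the only admissible limits are infima and suprema of chains, supplied by completeness, and the only admissible approximations run along such chains. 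Reconciling these constraints—showing that the supremum clause already forces $y$ to be a limit of images of elements of $\uparrow y$, and that the infimum clause then returns a point of $X$ whose image is exactly $y$—is the technical heart of the argument, and it is precisely here that both clauses of completeness and the closedness of the partial order of the topological semilattice $Y$ are indispensable.
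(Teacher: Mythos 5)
First, a point of reference: the paper does not actually prove Theorem~\ref{t:kc} --- it is quoted from \cite{BBm} --- and the machinery developed here (Lemmas~\ref{l:up-p}--\ref{l:t2}, Theorem~\ref{t:t}, Lemma~\ref{l:im}) is the sequential/countably tight analogue of that argument. Measured against that machinery, your proposal assembles the right ingredients (reduction to a dense subsemilattice, closedness of the partial order of $Y$, the two clauses of completeness) but has two genuine gaps. First, your construction of $x_0$ rests on the claim that $\sup h(C_0)=y$ for a chain $C_0$ whose image is cofinal in ${\downarrow}y\cap h(X)$; but $y\in\overline{h(X)}$ gives no control over ${\downarrow}y\cap h(X)$, which need not have supremum $y$ and may even be empty (a point of $\overline{h(X)}$ is typically approached ``from above'' or ``from the side''), so the existence of $x_0\in X$ with $h(x_0)\ge y$ is not established. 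Second, and more seriously, the descending chain $C_1\subset W=h^{-1}({\uparrow}y)$ with $h(\inf C_1)=y$ is never constructed: the elements $x_0x_\beta$ satisfy $h(x_0x_\beta)\to y$ but individually need not lie in $W$; and even granting a maximal chain in $W$, its infimum is $\min W$ (by the argument of Lemma~\ref{l:im}), whose image is $\min({\uparrow}y\cap h(X))$ --- and showing that this minimum equals $y$ is exactly the original problem, since it equals $y$ if and only if $y\in h(X)$. ``Maximality of $C_1$ plus closedness of the order'' is not an argument that it cannot sit strictly above $y$. You correctly name this as the main obstacle, but naming it is not resolving it.

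The device that actually closes the gap is the essential infimum of Section~\ref{s:pre}. Given a net (or sequence) of points of $h(X)$ converging to $y$, one chooses preimages $x_0,x_1,\dots$ inductively, each from a smaller neighbourhood of $y$ determined by continuity of the shifts at the finitely many products already built, so that \emph{all} tail products $h(x_k)\cdots h(x_n)\cdot y$ remain in a prescribed neighbourhood $U$ of $y$ (see the proof of Claim~\ref{cl:new2}). Then $c_k:=\inf_{n\ge k}x_k\cdots x_n$ exists by ${\downarrow}\w$-completeness and satisfies $h(c_k)=y\cdot h(c_k)\le y$ with $h(c_k)\in\bar U$, and $\sup_k c_k$ exists by ${\uparrow}\w$-completeness; running this construction over the neighbourhood filter of $y$ (this is where joint continuity and a transfinite version of the induction are needed in the non-sequential setting of \cite{BBm}) pins the image of the resulting element to $y$. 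Your proposal contains no substitute for this step, so as written it does not yield a proof.
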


Theorems~\ref{t:cf} and \ref{t:kc} motivate the following (still) open problem.

\begin{problem}\label{prob:main} Assume that $h:X\to Y$ is a continuous homomorphism from a complete topologized semilattice $X$ to a Hausdorff semitopological semilattice $Y$. Is $h(X)$ closed in $Y$?
\end{problem}
  
In this paper we answer this problem affirmatively under the additional condition that the semitopological semilattice $Y$ is sequential. We recall that a topological space $Y$ is {\em sequential} if each sequentially closed subset of $Y$ is closed.
A subset $A\subset Y$ is called {\em sequentially closed} if $A$ contains the limit points of all sequences $\{a_n\}_{n\in\w}\subset A$ that converge in $Y$. 

The following theorem is the main result of this paper. 

\begin{theorem}\label{t:main} For any continuous homomorphism $h:X\to Y$ from a complete topologized semilattice $X$ to a sequential Hausdorff semitopological semilattice $Y$ the image $h(X)$ is closed in $Y$.
\end{theorem}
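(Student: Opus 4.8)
The plan is to exploit sequentiality to reduce the closedness of $h(X)$ to a statement about a single convergent sequence. Since $Y$ is sequential, it suffices to prove that $h(X)$ is sequentially closed, so I fix a sequence $(x_n)_{n\in\w}$ in $X$ whose images $y_n:=h(x_n)$ converge to some point $y\in Y$, and I aim to produce a preimage of $y$ in $X$. Two elementary facts about the Hausdorff semitopological semilattice $Y$ will be used throughout. First, for every $a\in Y$ the sets ${\downarrow}a=\{t\in Y:ta=t\}$ and ${\uparrow}a=\{t\in Y:ta=a\}$ are closed, being equalizers of the continuous maps $t\mapsto ta$ and $t\mapsto t$ (respectively $t\mapsto a$) into the Hausdorff space $Y$. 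Second, separate continuity yields $a\cdot y_n\to a\cdot y$ for every fixed $a\in Y$; in particular, taking $a=y$ and using $yy=y$, the elements $a_n:=y\cdot y_n$ satisfy $a_n\to y$ and $a_n\le y$.

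The heart of the argument is the following reduction, which I would isolate as a lemma: if there is a chain $C\subset X$ with $h(c)\le y$ for all $c\in C$ and with $y\in\overline{h(C)}$, then $y\in h(X)$. Indeed, by completeness of $X$ the supremum $s:=\sup C$ exists and lies in $\overline C$, so continuity of $h$ gives $h(s)\in\overline{h(C)}$. As every $h(c)$ lies in the closed set ${\downarrow}y$, so does $h(s)$, whence $h(s)\le y$. For the reverse inequality I would pick a net $(c_i)$ in $C$ with $h(c_i)\to y$ (available since $y\in\overline{h(C)}$); from $s\ge c_i$ we get $h(s)\cdot h(c_i)=h(c_i)$, and passing to the limit via separate continuity, with the first coordinate fixed at $h(s)$, yields $h(s)\cdot y=y$, i.e. $y\le h(s)$. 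Antisymmetry of the order then gives $h(s)=y\in h(X)$.

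It remains to construct such a chain, and this is the step I expect to be the main obstacle. The natural candidate is the increasing chain of tail infima $q_n:=\inf\{x_nx_{n+1}\cdots x_m:m\ge n\}$, which exist and lie in the closures of the corresponding decreasing chains by completeness of $X$. One checks readily that $q_n\le q_{n+1}$, so $C:=\{q_n:n\in\w\}$ is a chain, and that $h(q_n)\le y_k$ for all $k\ge n$, which forces $h(q_n)\le y$; since moreover $q_n\le x_n$ gives $h(q_n)\le y\cdot y_n=a_n$, the chain lands in ${\downarrow}y$ with images dominated by the $a_n$. The delicate point is to verify the accumulation condition $y\in\overline{h(C)}$, equivalently that the increasing sequence $(h(q_n))$ converges to $y$ rather than to some strictly smaller element. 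This is exactly where merely separate continuity is too weak: unlike in the topological case of Theorem~\ref{t:kc}, joint continuity of the operation at $(y,y)$ is unavailable to force the tail infima up to $y$, so the gap between $h(\inf(\cdot))$ and $\inf h(\cdot)$ must instead be closed by feeding in the sequentiality of $Y$ together with the identities $h(q_n)\le a_n$ and $a_n\to y$. Ruling out such a gap below $y$—showing that the constructed chain genuinely accumulates at $y$—is the crux of the proof.
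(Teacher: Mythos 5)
There is a genuine gap, and it sits exactly where you say it does: you never establish that $y\in\overline{h(C)}$ for your chain $C=\{q_n\}_{n\in\w}$ of tail infima, and in fact this step can genuinely fail for that choice of chain. Nothing prevents the sequence $(x_n)$ from being, say, an antichain in $X$ with $x_nx_m$ equal to a fixed small element for all $n\ne m$; then every $q_n=\inf\{x_n\cdots x_m:m\ge n\}$ collapses to that small element, $\sup_n h(q_n)$ sits strictly below $y$, and your reduction lemma (which is correct but easy) has nothing to apply to. The obstruction is not a technicality to be "fed sequentiality": for a fixed enumeration of the sequence the tail infima simply do not see $y$, and one must instead pass to a carefully selected infinite \emph{subsequence} before taking infima. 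Your proposal contains no mechanism for making that selection, and that selection is the entire content of the paper's argument.

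Concretely, the paper's route is as follows. First (Lemma~\ref{l:im}) the completeness of $X$ is transferred to the image: $h(X)$ is itself a down-complete, ${\uparrow}\w_1$-complete subsemilattice of $Y$ (take for each $y\in h(X)$ the minimum of $h^{-1}({\uparrow}y)$, which exists by completeness), so the problem reduces to showing that such a subsemilattice is sequentially closed. Second, for an infinite set $A$ converging to $y$ one works with the \emph{essential infimum} $\essinf B=\sup_{F\in[B]^{<\w}}\inf(B\setminus F)$ of infinite subsets $B\subset A$. The key step (Claim~\ref{cl:new2}) shows that if $\essinf B\ne y$, one can choose points $x_0,x_1,\dots$ of $B$ inductively, each inside an intersection of neighborhoods of $y$ arranged so that all products $x_k\cdots x_ny$ stay in a fixed neighborhood $U$ of $y$ with $\essinf B\notin\bar U$; this is precisely how separate continuity is exploited, since only one new factor is adjoined at each step with $y$ held fixed. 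The resulting subset $C$ satisfies $\essinf B<\essinf C$. Iterating this transfinitely along a $\subset^*$-decreasing tower of length $\mathfrak t$ produces a strictly increasing transfinite sequence of length $\mathfrak t$ in $h(X)$, which is impossible because a sequential space has countable tightness and (Lemma~\ref{l:t}) a countably tight ${\uparrow}\w_1$-complete semilattice with closed lower sets admits no strictly increasing $\w_1$-sequence. None of this machinery --- the essential infimum, the subsequence extraction via separate continuity, and the tower-number argument --- appears in your proposal, so the crux you correctly identify remains unproved.
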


Theorem~\ref{t:main} will be proved in Section~\ref{s:im} after some preliminary work made in Sections~\ref{s:pre}, \ref{s:sub}. More precisely, Theorem~\ref{t:main} is an immediate corollary of Corollary~\ref{c:im-t} treating homomorphisms from complete topologized semilattices to Hausdorff semitopological semilattices of countable tightness.

\section{Some properties of ${\updownarrow}\w$-complete topologized semilattices}\label{s:pre}

In this section we shall prove some properties of complete topologized semilattices, which will be used in the proof of our main results.

First we introduce a parametric version of completeness. Let $\kappa$ be an infinite  cardinal. A topologized semilattice $X$ is defined to be
\begin{itemize}
\item {\em ${\uparrow}\kappa$-complete} if any non-empty chain $C\subset X$ of cardinality $|C|\le\kappa$ has $\sup C\in\bar C$;
\item {\em ${\downarrow}\kappa$-complete} if any non-empty chain $C\subset X$ of cardinality $|C|\le\kappa$ has $\inf C\in\bar C$;
\item {\em ${\updownarrow}\kappa$-complete} if $X$ is ${\uparrow}\kappa$-complete and ${\downarrow}\kappa$-complete;
\item {\em down-complete} if $X$ is ${\downarrow}\kappa$-complete for any cardinal $\kappa$.
\end{itemize}
It is clear that a topologized semilattice $X$ is complete if and only if $X$ is ${\updownarrow}\kappa$-complete for every cardinal $\kappa$ if and only if 
$X$ is ${\updownarrow}\kappa$-complete for the cardinal $\kappa=|X|$.

A subset $D$ of a poset $(X,\le)$ is called {\em up-directed} (resp. {\em down-directed\/}) if for any elements $x,y\in D$ there exists an element $z\in D$ such that $x\le z$ and $y\le z$ (resp. $z\le x$ and $z\le y$). It is clear that each chain is both up-directed and down-directed.


\begin{lemma}\label{l:up-p} If a topologized semilattice $X$ is ${\uparrow}\w$-complete, then any non-empty countable up-directed subset $D\subset X$ has $\sup D\in\bar D$.
\end{lemma}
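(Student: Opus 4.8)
The plan is to reduce the assertion to the chain case, where ${\uparrow}\w$-completeness applies directly. The only obstruction is that a countable up-directed set $D$ need not itself be a chain, so the first step is to extract from $D$ a countable \emph{cofinal chain} $C$, that is, an increasing sequence lying in $D$ that dominates every element of $D$. Once such a $C$ is produced, ${\uparrow}\w$-completeness supplies $\sup C\in\bar C$, and it then remains only to check that $\sup C=\sup D$ and that $\bar C\subseteq\bar D$.

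To build the chain I would fix an enumeration $D=\{d_n:n\in\w\}$ (possible since $D$ is non-empty and countable) and define a sequence $(c_n)_{n\in\w}$ in $D$ recursively. Put $c_0:=d_0$; given $c_n\in D$, apply up-directedness to the pair $c_n,d_{n+1}\in D$ to obtain some $c_{n+1}\in D$ with $c_n\le c_{n+1}$ and $d_{n+1}\le c_{n+1}$. By construction the sequence $(c_n)_{n\in\w}$ is increasing, so $C:=\{c_n:n\in\w\}$ is a countable chain contained in $D$, and moreover $d_n\le c_n$ for every $n\in\w$.

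Now ${\uparrow}\w$-completeness of $X$ yields that $s:=\sup C$ exists and $s\in\bar C$. I would then verify $s=\sup D$ directly from the order. On one hand $s$ is an upper bound of $D$, since each $d_n\le c_n\le s$; on the other hand, any upper bound of $D$ bounds the subset $C\subseteq D$ from above and hence dominates $s=\sup C$, so $s$ is the \emph{least} upper bound of $D$. Finally, the inclusion $C\subseteq D$ gives $\bar C\subseteq\bar D$, whence $\sup D=s\in\bar C\subseteq\bar D$, which is exactly the required conclusion.

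The construction of the cofinal chain is the only substantive step, and it is entirely routine, so I do not anticipate a genuine obstacle here. It is worth noting that the argument uses only the order-theoretic hypotheses (up-directedness together with ${\uparrow}\w$-completeness) and never the semilattice multiplication itself, and that it applies verbatim when $D$ is finite.
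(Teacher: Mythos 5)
Your proof is correct and follows essentially the same route as the paper's: both extract a cofinal increasing chain $C\subset D$ by recursively dominating the previously chosen element and the next element of an enumeration, apply ${\uparrow}\omega$-completeness to $C$, and then check $\sup C=\sup D$ and $\bar C\subset\bar D$. No issues.
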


\begin{proof} Assume that the topologized semilattice $X$ is ${\uparrow}\w$-complete and take any non-empty countable directed subset $D=\{x_n\}_{n\in\w}$ in $X$. Put $y_0:=x_0$ an for every $n\in\IN$ choose an element $y_n\in D$ such that $y_n\ge x_n$ and $y_n\ge y_{n-1}$ (such an element $y_n$ exists as $D$ is directed).

By the ${\uparrow}\w$-completeness, the chain $C:=\{y_n\}_{n\in\w}$ has $\sup C\in \bar C\subset\bar D$. We claim that $\sup C$ is the largest lower bound of the set $D$. Indeed, for any $n\in\w$ we get $x_n\le y_n\le\sup D$ and hence $\sup C$ is an upper bound for the set $D$. On the other hand, each upper bound $b$ for $D$ is an upper bound for $C$ and hence $\sup C\le b$. Therefore $\sup D=\sup C\in\bar C\subset\bar D$.
\end{proof}


\begin{lemma}\label{l:down} If a topologized semilattice $X$ is ${\downarrow}\w$-complete, then each non-empty countable subset $A\subset X$ has $\inf A\in \bar S$ where $S$ is the semilattice generated by $A$ in $X$.
\end{lemma}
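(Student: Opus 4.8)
The plan is to replace the unstructured countable set $A$ by a decreasing chain sitting inside the generated semilattice $S$, so that the one-dimensional hypothesis of ${\downarrow}\w$-completeness can be applied directly. Enumerate $A=\{a_n\}_{n\in\w}$ (repeating elements if $A$ is finite, in which case the conclusion is anyway trivial since then $\inf A\in S$) and, for each $n\in\w$, form the partial product $b_n:=a_0a_1\cdots a_n$. Because the semigroup operation of a semilattice coincides with the meet operation of the associated poset, each $b_n$ is precisely the infimum of the finite set $\{a_0,\dots,a_n\}$; in particular $b_n\in S$, $b_n\le a_n$, and $b_{n+1}=b_na_{n+1}\le b_n$. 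Hence $C:=\{b_n\}_{n\in\w}$ is a countable decreasing chain contained in $S$.

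Applying the ${\downarrow}\w$-completeness of $X$ to the chain $C$ gives $\inf C\in\bar C\subset\bar S$, so it only remains to identify $\inf C$ with $\inf A$. For one inclusion, for every $n\in\w$ we have $\inf C\le b_n\le a_n$, which shows that $\inf C$ is a lower bound of $A$. For maximality, let $\ell$ be an arbitrary lower bound of $A$; then $\ell\le a_i$ for all $i\le n$, and since $b_n$ is the meet of $a_0,\dots,a_n$ this forces $\ell\le b_n$ for every $n$. Thus $\ell$ is a lower bound of the whole chain $C$, whence $\ell\le\inf C$. Combining the two, $\inf A$ exists and equals $\inf C\in\bar S$.

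The computations here are entirely routine, and there is no substantial obstacle; the only points requiring a moment of care are the two elementary semilattice facts feeding the argument, namely $b_n\le a_n$ (which follows from idempotency) and the implication ``$\ell\le a_0,\dots,a_n\Rightarrow\ell\le b_n$'' (which is the statement that the finite product is a lower bound dominating all common lower bounds). The conceptual content of the lemma is exactly the passage from the arbitrary countable set $A$ to the decreasing chain $C$ of partial products: this reduction is what converts a statement about countable sets into one about countable \emph{chains}, to which the hypothesis applies, and it is the reason the conclusion lands in $\bar S$ rather than merely in $\bar A$.
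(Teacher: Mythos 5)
Your proof is correct and follows exactly the paper's argument: form the chain of partial products $x_0\cdots x_n$ inside $S$, apply ${\downarrow}\w$-completeness to that chain, and observe that its infimum coincides with $\inf A$. The paper states the identity $\inf A=\inf C$ without proof; you simply spell out that routine verification.
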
 

\begin{proof} Let $A=\{x_n\}_{n\in\w}$ be a countable set in $X$. By the ${\downarrow}\w$-completeness of $X$, the chain $C:=\{x_0\cdots x_n\}_{n\in\w}\subset S$ has $\inf C\in\bar C\subset\bar S$. Taking into account that $\inf A=\inf C$, we conclude that $\inf A=\inf C\in\bar S$.
\end{proof}


Let $X$ be an ${\updownarrow}\w$-complete topologized semilattice, $A$ is a non-empty countable set in $X$ and $S$ be a subsemilattice of $X$, generated by $A$.
Let $[A]^{<\w}$ be the family of all finite subsets of $A$. 
Lemma~\ref{l:down} implies that for each $F\in[A]^{<\w}$ the set $A\setminus F$ has $\inf (A\setminus F)\in\bar S$. By Lemma~\ref{l:up-p}, the directed set $D:=\{\inf (A\setminus F):F\in[A]^{<\w}\}$ has $\sup D\in\bar D\subset \bar S$. The element $$\mathrm{inf}^*\! A:=\sup_{F\in[A]^{<\w}}\inf(A\setminus F)=\sup D\in\bar S$$ will be called the {\em essential infimum} of the set $A$ in $X$. 

An important property of the essential infimum is that $\inf^* B\le\inf^* A$ for any countable subset $A\subset^* B$ in $X$. Here the symbol $A\subset^* B$ means that the complement $A\setminus B$ is finite, so $A$ is almost included into $B$.

We shall say that an infinite subset $A$ of a topological space $X$ {\em converges to a point} $x\in X$ if  each neighborhood $O_x\subset X$ of $x$ contains all but finitely many points of the set $A$ (which means that $A\subset^* O_x$). If an infinite set $A$ converges to a point $x$, then any infinite subset $B\subset A$ also converges to $x$.

We are going to show that for any ${\updownarrow}\w$-complete Hausdorff topologized semilattice $X$ containing no strictly increasing transfinite sequences of length $\mathfrak t$, any countable subset $A\subset X$ that converges to a point $x$ contains an infinite subset $B\subset A$ such that $\inf^*B=x$.
\smallskip

The cardinal $\mathfrak t$ (called the {\em tower number} in \cite{vD}) is defined as the smallest cardinal $\kappa$ for which there exists a transfinite sequence $(T_\alpha)_{\alpha\in \kappa}$ of infinite subsets of $\w$ having the following two properties:
\begin{enumerate}
\item $T_\beta\subset^* T_\alpha$ for all $\alpha<\beta<\kappa$;
\item for any infinite set $I\subset \w$ there exists $\alpha\in\kappa$ such that $I\not\subset^* T_\alpha$.
\end{enumerate}
It is known \cite{vD}, \cite{Vau} that $\w_1\le\mathfrak t\le\mathfrak c$, and $\mathfrak t=\mathfrak c$ under Martin's Axiom. By a recent breakthrough result of Malliaris and Shelah  \cite{MS}, $\mathfrak t$ is equal to the {\em pseudointersection number} $\mathfrak p$, defined as the smallest cardinality of a non-empty family $\mathcal A$ of subsets of $\w$ such that for any finite subfamily $\mathcal F\subset\mathcal A$ the intersection $\cap\mathcal F$ is infinite but for any infinite set $I\subset \w$ there exists $A\in\mathcal A$ such that $I\not\subset^* A$. More information on cardinals $\mathfrak p$, $\mathfrak t$ and other cardinal characteristics of the continuum can be found in the surveys \cite{vD}, \cite{Vau}, \cite{Blass}. We identify cardinals with the smallest ordinals of a given cardinality.

Let $\kappa$ be an ordinal. A transfinite sequence $(x_\alpha)_{\alpha\in\kappa}$ of points of a partially ordered set $X$ is called {\em strictly increasing} if $x_\alpha<x_\beta$ for any ordinals $\alpha<\beta$ in $\kappa$.

\begin{lemma}\label{l:t2} Let $Y$ be a Hausdorff semitopological semilattice and $X$ be an ${\updownarrow}\w$-complete subsemilattice of $Y$ containing no strictly increasing transfinite sequences of length $\mathfrak t$. Then each infinite set $A\subset X$ that converges to a point $y\in Y$ contains an infinite subset $B\subset A$ such that $\essinf B=y$.
\end{lemma}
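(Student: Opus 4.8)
The plan is to reduce to a countable set, dispatch the trivial inequality $\essinf B\le y$ for every infinite $B\subseteq A$, and then run a transfinite \emph{shrinking} construction that pushes the essential infimum up to $y$, using the tower number $\mathfrak t$ to survive limit stages and the absence of long strictly increasing sequences to force termination exactly at the value $y$. First I would replace $A$ by a countably infinite subset: any infinite subset of a set converging to $y$ again converges to $y$, and $\essinf$ is only defined for countable sets, so nothing is lost in assuming $A=\{t_n:n\in\w\}$.

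Next I would prove the easy half, that $\essinf B\le y$ for \emph{every} infinite $B\subseteq A$. Writing $D_B:=\{\inf(B\setminus F):F\in[B]^{<\w}\}$, each $d=\inf(B\setminus F)\in D_B$ satisfies $ds=d$ for every $s$ in the cofinite set $B\setminus F$, which still converges to $y$; since $s\mapsto ds$ is continuous and $Y$ is Hausdorff, passing to the limit gives $dy=d$, i.e. $d\le y$. Thus the equalizer $\{x\in Y:xy=x\}$, being closed in the Hausdorff space $Y$, contains $D_B$ and hence its closure; as $\essinf B=\sup D_B\in\overline{D_B}$ we obtain $\essinf B\cdot y=\essinf B$, that is $\essinf B\le y$. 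I would also record the monotonicity noted in the paper, $B'\subseteq B\Rightarrow\essinf B'\ge\essinf B$, so shrinking a set can only \emph{raise} its essential infimum.

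The core construction is then a transfinite recursion producing a $\subseteq^*$-decreasing tower $(A_\alpha)$ of infinite subsets of $A$ with $z_\alpha:=\essinf A_\alpha$ strictly increasing and $A_0=A$: as long as $z_\alpha<y$ I pass to an infinite subset of strictly larger essential infimum, and I stop if $z_\alpha=y$. At a limit stage $\gamma<\mathfrak t$ the family $(A_\alpha)_{\alpha<\gamma}$ is $\subseteq^*$-decreasing of length $<\mathfrak t$, so by the defining property of $\mathfrak t$ it has an infinite pseudointersection, which (after intersecting with $A$) serves as $A_\gamma\subseteq A$; monotonicity yields $z_\gamma\ge z_{\alpha+1}>z_\alpha$ for all $\alpha<\gamma$, so strictness persists. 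Since each $z_\alpha=\essinf A_\alpha\in\overline S\subseteq X$ and $X$ has no strictly increasing transfinite sequence of length $\mathfrak t$, the recursion cannot run through all $\alpha<\mathfrak t$; hence it halts at some $\alpha^*<\mathfrak t$, which can only happen because $z_{\alpha^*}=y$, and we set $B:=A_{\alpha^*}$. Thus everything reduces to one implication (the \emph{successor step}): \emph{if $\essinf T<y$ for an infinite $T\subseteq A$, then some infinite $T'\subseteq T$ has $\essinf T'>\essinf T$.}

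The hard part will be exactly this successor step, and it is where the hypotheses on $Y$ are genuinely used. Put $z:=\essinf T<y$ and recall $z=\sup_k d_k$ with $d_k:=\inf\{t_n:n\ge k\}$ increasing and $z\in\overline{\{d_k\}}$ by ${\uparrow}\w$-completeness. Using Hausdorffness I would separate $z$ from $y$ by disjoint open sets $U\ni z$, $V\ni y$, discard the finitely many $t\in T\setminus V$, and then play off the two competing separately-continuous limits $zt\to zy=z$ and $yt\to yy=y$ as $t\to y$ along $T$. The essential obstruction is that, unlike in the jointly continuous case of Theorem~\ref{t:kc}, the pairwise meets $t\wedge t'$ need not converge to $y$, so the tail infima $d_k$ may stall strictly below $y$; this is precisely what the $\mathfrak t$-machinery and the no-long-chain hypothesis are designed to circumvent. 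Concretely I would split according to the dichotomy ``$t\ge z$ for infinitely many $t$'' versus ``$zt<z$ for cofinitely many $t$'': in the first case the subset $\{t\in T:t\ge z\}$ lies in ${\uparrow}z$, converges to $y>z$, and one argues that its tail infima cannot all equal $z$ without producing a forbidden structure; in the second case I would use $zt\to z$ together with $U\cap V=\emptyset$ to select a subset whose induced infima escape the level $z$. Isolating this strict-increase phenomenon, and showing it cannot fail while $z<y$, is the technical heart of the proof; everything else is bookkeeping with $\subseteq^*$, $\mathfrak t$, and monotonicity.
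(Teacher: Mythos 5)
Your global architecture coincides with the paper's: reduce to a countable set, use monotonicity of $\essinf$ under almost-inclusion, build a $\subset^*$-decreasing tower of length $<\mathfrak t$ (pseudointersections at limit stages), and invoke the ban on strictly increasing transfinite sequences of length $\mathfrak t$ to force termination. Your preliminary observation that $\essinf B\le y$ for every infinite $B\subset A$ (via the closed set $\{x\in Y: xy=x\}$ and the fact that $\essinf B\in\overline{D_B}$) is correct and is essentially the same mechanism the paper uses inside its key claim. All of this is sound bookkeeping.

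The genuine gap is the successor step, which you explicitly leave open: \emph{if $\essinf T<y$, produce an infinite $T'\subset T$ with $\essinf T'>\essinf T$.} This is precisely Claim~\ref{cl:new2} and it is the entire content of the lemma; your proposed dichotomy does not resolve it. In your first case ($t\ge z$ for infinitely many $t$, where $z=\essinf T$), every tail infimum of the subset $\{t\in T:t\ge z\}$ is $\ge z$, and if its essential infimum still equals $z$ then all tail infima equal $z$ exactly; you assert this yields ``a forbidden structure'' but give no argument, and none is visible --- in a merely semitopological semilattice the meets of points close to $y$ need not be close to $y$, so nothing prevents the tail infima from stalling at $z$. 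The paper's resolution is an inductive extraction absent from your sketch: separate $z$ from $y$ by an open $U\ni y$ with $z\notin\bar U$, then recursively choose $x_0,x_1,\dots$ in $T$ so that \emph{all} partial products $y_{k,n}=x_k\cdots x_ny$ lie in $U$; this is possible because at each step one only needs continuity of the single-variable maps $v\mapsto y_{k,n}vy$ at $v=y$ (where the value is $y_{k,n}\in U$), which is exactly what separate continuity provides. For $C=\{x_k\}_{k\in\w}$, Lemma~\ref{l:down} puts $\inf C_k$ in $\cl_X(\{x_k\cdots x_n:n\ge k\})$, the closedness of ${\uparrow}\inf C_k$ gives $y\cdot\inf C_k=\inf C_k$, and applying the shift $s_y$ moves $\inf C_k$ into $\cl_Y(\{x_k\cdots x_ny:n\ge k\})\subset\bar U$; then ${\uparrow}\w$-completeness places $\essinf C=\sup_k\inf C_k$ in $\bar U$, so $\essinf C\ne z$ and hence $\essinf C>z$ by monotonicity. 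Without this (or an equivalent) construction, your proof does not go through.
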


\begin{proof} Fix an infinite set $A\subset X$ that converges to a point $y\in Y$. To derive a contradiction, assume that $\essinf B\ne y$ for any infinite subset $B\subset A$. 

\begin{claim}\label{cl:new2} For any infinite subset $B\subset A$ there exists an infinite subset $C\subset B$ such that $\essinf B<\essinf C$.
\end{claim}

\begin{proof} By our assumption, $\essinf B\ne y$. By the Hausdorff property of $Y$, there exists an open neighborhood $U\subset Y$ of $y$ such that $\essinf B\notin \bar U$. 

Inductively we shall construct a sequence of pairwise distinct points $\{x_n\}_{n\in\w}$ in $B$ such that for every $k\le n$ the product $y_{k,n}:=x_k\cdots x_ny$ is contained in $U$. 

To start the inductive construction, find a neighborhood $V_{0}\subset U$ of $y$ such that $V_{0}y\subset U$ and choose any point $x_0\in V_{0}\cap B$ (such a point exists as $B\subset A$ converges to $y$). It follows that $x_0y\in V_0y\subset U$.

Assume that for some $n\in\w$  points $x_0,\dots,x_n\in B$ are chosen so that $y_{k,n}:=x_k\cdots x_ny\in U$ for every $k\le n$. 
For every $k\le n$ choose a neighborhood $V_{k,n}\subset U$ of $y$ such that $y_{k,n}V_{k,n}y\subset U$ (such neighborhood exists since $y_{k,n}yy=y_{k,n}\in U$ and $Y$ is a semitopological semilattice). Consider the neighborhood $V_{n+1}:=\bigcap_{k\le n}V_{k,n}$ of $y$ and choose any point $x_{n+1}\in V_{n+1}\cap B\setminus\{x_0,\dots,x_n\}$. For every $k\le n$ the choice of the neighborhood $V_{k,n}$ guarantees that $y_{k,n+1}:=x_k\cdots x_nx_{n+1}y\in U$. This completes the inductive step.
\smallskip

After completing the inductive construction, consider the set $C=\{x_k\}_{k\in\w}\subset B$. We claim that $\inf^*C=y\cdot\inf^* C\in \bar U$. 
For every $k\in\w$ consider the set $C_k=\{x_n\}_{n\ge k}\subset C$ and let $S_k$ be the subsemilattice generated by $C_k$ in $X$. By the ${\downarrow}\w$-completeness of $X$ and Lemma~\ref{l:down}, the set $C_k$ has $\inf C_k=\inf S_k\in\bar S_k$. In fact, $\inf C_k=\inf S_k=\inf\{x_{k}\cdots x_n:n\ge k\}\in\cl_X(\{x_k\cdots x_n:n\ge k\})$. Observe that $\inf C_k\le x_n$ for all $n\ge k$. Consequently, $y\in \cl_Y(\{x_n\}_{n\ge k})\subset{\uparrow}\inf C_k$ and hence $y\cdot\inf C_k=\inf C_k$.

The continuity of the shift $s_y:Y\to Y$, $s_y:z\mapsto zy$, guarantees that $\inf C_k=y\cdot\inf C_k\in y\cdot\cl_X(\{x_k\cdots x_n:n\ge k\})\subset \cl_Y(\{x_k\cdots x_ny:n\ge k\})\subset \bar U$.

By the ${\uparrow}\w$-completeness of $X$, we have $\essinf C=\sup_{k\in\w}\inf C_k\in \cl_X(\{\inf C_k:k\in\w\})\subset \bar U$ and hence $\essinf C\ne\essinf B$ as $\essinf B\notin \bar U$. Taking into account that $\essinf B\le \essinf C$, we conclude that $\essinf B<\essinf C$.
\end{proof}

Choose any countable infinite subset $A_0\subset A$. 
Claim~\ref{cl:new2} (on successor steps) and the definition of the cardinal $\mathfrak t$ (on limit steps) help us to construct a transfinite sequence $(A_\alpha)_{\alpha<\mathfrak t}$ of infinite subsets of $A_0$ such that for any $\alpha<\mathfrak t$ the following conditions are satisfied:
\begin{enumerate}
\item $A_{\alpha+1}\subset A_\alpha$ and $\essinf A_\alpha<\essinf A_{\alpha+1}$.
\item $A_\alpha\subset^* A_\gamma$ for all $\gamma<\alpha$.
\end{enumerate}
Then $\big(\essinf A_\alpha\big){}_{\alpha\in\mathfrak t}$ is a strictly increasing transfinite sequence of length $\mathfrak t$ in $X$, whose existence is forbidden by our assumption. This contradiction completes the proof of the lemma.
\end{proof}

We recall that a topological space $X$ is defined to have {\em countable tightness} if for any set $A\subset X$ and point $a\in\bar A$, there exists a countable set $B\subset A$ such that $a\in\bar B$. It is known \cite[1.7.3{c}]{Engelking1989} that each subspace of a sequential space has countable tightness.

\begin{lemma}\label{l:t} A topologized semilattice $X$ contains no strictly increasing transfinite sequences of length $\w_1$ if $X$ is ${\uparrow}\w_1$-complete, the space $X$ has countable tightness, and for every $x\in X$ the lower set ${\downarrow}x$ is closed in $X$.
\end{lemma}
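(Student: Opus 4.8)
The plan is to argue by contradiction. Suppose $(x_\alpha)_{\alpha<\w_1}$ is a strictly increasing transfinite sequence in $X$, and let $C=\{x_\alpha:\alpha<\w_1\}$ be the associated chain, which has cardinality $\le\w_1$. First I would invoke the ${\uparrow}\w_1$-completeness of $X$ to produce the supremum $s:=\sup C$, which moreover lies in the closure $\bar C$. The whole strategy then rests on showing that this $s$ is in fact dominated by some \emph{early} term of the sequence, which will clash with $s$ being an upper bound for \emph{every} $x_\alpha$.

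Next I would exploit the countable tightness of $X$: since $s\in\bar C$, there is a countable subset $B\subset C$ with $s\in\bar B$. Writing $B=\{x_\alpha:\alpha\in S\}$ for a countable index set $S\subset\w_1$, the key combinatorial observation is that $\w_1$ has uncountable cofinality, so $S$ is bounded, say $S\subset\gamma$ for some $\gamma<\w_1$. Monotonicity of the sequence then yields $B\subset{\downarrow}x_\gamma$.

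Finally I would use the hypothesis that lower sets are closed: since ${\downarrow}x_\gamma$ is closed and contains $B$, it contains $\bar B$, and hence $s\in{\downarrow}x_\gamma$, i.e. $s\le x_\gamma$. On the other hand $s=\sup C\ge x_{\gamma+1}>x_\gamma$, and this contradiction completes the argument.

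The step I expect to carry the real weight is the passage from the uncountable chain to a bounded countable piece of it. Countable tightness is precisely what lets us replace $C$ by a countable $B$ whose closure still captures $s$, while the regularity of $\w_1$ is what forces that countable $B$ to sit below a single earlier term $x_\gamma$; without either ingredient the argument collapses. The closedness of the lower sets ${\downarrow}x_\gamma$ then serves as the mechanism transferring the bound from $B$ to its closure, and thereby to $s$ itself.
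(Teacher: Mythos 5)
Your proof is correct and follows essentially the same route as the paper's: take $\sup C\in\bar C$ by ${\uparrow}\w_1$-completeness, use countable tightness to replace $C$ by a countable $B$ whose indices are bounded by some $\gamma<\w_1$, and then use closedness of ${\downarrow}x_\gamma$ to force $\sup C\le x_\gamma<x_{\gamma+1}\le\sup C$. No gaps; the appeal to the uncountable cofinality of $\w_1$ is exactly the step the paper makes when it bounds $B$ by a countable ordinal $\beta$.
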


\begin{proof} To derive a contradiction, assume that $X$ contains a strictly increasing transfinite sequence $(x_\alpha)_{\alpha\in\w_1}$. By the ${\uparrow}{\w_1}$-completeness of $X$ the chain $C=\{x_\alpha:\alpha\in\w_1\}$ has $c:=\sup C\in \bar C$. Since $(x_\alpha)_{\alpha\in\w_1}$ is strictly increasing, $c\notin C$. By the countable tightness of $X$, there exists a countable set $B\subset C$ such that $c\in\bar B$. By the countability of $B$, there exists a countable ordinal $\beta$ such that  $B\subset\{x_\alpha:\alpha<\beta\}$. Then $c\in\bar B\subset\overline{{\downarrow}x_\beta}={\downarrow}x_\beta$ and hence $c\le x_\beta<x_{\beta+1}\le c$, which is a desired contradiction.
\end{proof}

\section{The closedness of complete subsemilattices}\label{s:sub}

In this section we search for conditions of (sequential) closedness of a ${\updownarrow}\w$-complete semilattice $X$ in a Hausdorff semitopological semilattice $Y$.

\begin{theorem}\label{t:t} Let $Y$ be a Hausdorff semitopological semilattice and $X$ be a ${\updownarrow}\w$-complete subsemilattice of $Y$. If $X$ contains no strictly increasing transfinite sequences of length $\mathfrak t$, then $X$ is sequentially closed in $Y$ and the partial order $\{(x,y)\in X\times X:xy=x\}$ of $X$ is sequentially closed in $Y\times Y$.
\end{theorem}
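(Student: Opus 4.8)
The plan is to handle the two sequential-closedness assertions separately, each time reducing to a convergent sequence and feeding an appropriate infinite subset into Lemma~\ref{l:t2}. The obstruction throughout is that $Y$ is merely \emph{semi}topological, so one cannot pass to the limit inside a product $a_nb_n$. The device that substitutes for joint continuity is the closedness of the principal upper sets: for each $c\in Y$ the shift $s_c\colon Y\to Y$, $s_c(z)=zc$, is continuous and ${\uparrow}c=\{z\in Y:zc=c\}=s_c^{-1}(c)$ is therefore closed in the Hausdorff space $Y$. This is exactly the fact already exploited inside the proof of Lemma~\ref{l:t2}, and it lets me promote a pointwise inequality $c\le z_k$, valid along a sequence $z_k\to z$, to the limit inequality $c\le z$.

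First I would prove that $X$ is sequentially closed in $Y$. Let $\{a_n\}_{n\in\w}\subset X$ converge to some $a\in Y$ and put $A=\{a_n:n\in\w\}$. If $A$ is finite, the Hausdorff property forces $a_n=a$ for all sufficiently large $n$, so $a\in X$. If $A$ is infinite, then $A$ converges to $a$ in the sense used in this paper, so Lemma~\ref{l:t2} yields an infinite $B\subset A$ with $\essinf B=a$. Since the essential infimum is by construction a supremum of infima taken inside the ${\updownarrow}\w$-complete semilattice $X$, it lies in $X$; thus $a=\essinf B\in X$, and $X$ is sequentially closed.

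Next I would establish that $P=\{(x,y)\in X\times X:xy=x\}$ is sequentially closed in $Y\times Y$. Take $(a_n,b_n)\in P$ with $(a_n,b_n)\to(a,b)$, so that $a_n\le b_n$, $a_n\to a$ and $b_n\to b$. By the first part $a,b\in X$, and it remains to prove $a\le b$, i.e. $ab=a$. When the range $\{a_n\}$ is finite we again have $a_n=a$ eventually, whence $ab_n=a$ for large $n$ and continuity of $s_a$ gives $ab=\lim_n ab_n=a$. When $\{a_n\}$ is infinite, Lemma~\ref{l:t2} provides an infinite $B\subset\{a_n\}$ with $\essinf B=a$; I fix an injective enumeration $B=\{a_{m_k}\}_{k\in\w}$ along an increasing sequence of indices and set $B_j=\{a_{m_k}:k\ge j\}$. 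Because the tails $B_j$ are cofinal among the finite-complement subsets of $B$, the increasing chain of their infima satisfies $\sup_{j}\inf B_j=\essinf B=a$.

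The crux is to show $\inf B_j\le b$ for each $j$. For every $k\ge j$ we have $\inf B_j\le a_{m_k}\le b_{m_k}$, so the subsequence $\{b_{m_k}\}_{k\ge j}$ lies in the closed set ${\uparrow}\inf B_j$; since $b_{m_k}\to b$, we conclude $b\in{\uparrow}\inf B_j$, that is $\inf B_j\le b$. Thus $b$ is an upper bound of the chain $\{\inf B_j\}_{j\in\w}$, so $a=\sup_j\inf B_j\le b$, giving $ab=a$ and $(a,b)\in P$. I expect this transfer of the inequality to the limit to be the main obstacle: it is precisely the point at which the naive identity $\lim a_nb_n=ab$ fails for want of joint continuity, and where the closedness of the upper sets ${\uparrow}c$ carries the argument.
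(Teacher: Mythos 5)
Your proof is correct, and while the overall strategy is the one the paper uses --- reduce both assertions to Lemma~\ref{l:t2} --- your treatment of the partial order is genuinely different in its second half. The paper applies Lemma~\ref{l:t2} \emph{twice}: once to $\{x_n\}$ to get an infinite $\Omega$ with $x=\inf^*\{x_n\}_{n\in\Omega}$, and once more to $\{y_n\}_{n\in\Omega}$ to get $\Lambda\subset\Omega$ with $y=\inf^*\{y_n\}_{n\in\Lambda}$ (with a separate sub-case when $\{y_n\}_{n\in\Omega}$ is finite); it then concludes $x\le y$ from the termwise inequalities $x_n\le y_n$ and the monotonicity of $\inf^*$ under almost-inclusion. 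You apply the lemma only to the first coordinates and transfer the inequality to the limit directly: $\inf B_j\le a_{m_k}\le b_{m_k}$ puts the tail of $(b_{m_k})_k$ inside the closed set ${\uparrow}\inf B_j=s_{\inf B_j}^{-1}(\inf B_j)$, whence $\inf B_j\le b$, and then $a=\sup_j\inf B_j\le b$. This saves one invocation of Lemma~\ref{l:t2} and the finite/infinite dichotomy for the second coordinates, at the modest cost of spelling out the identification $\inf^* B=\sup_j\inf B_j$ along the cofinal chain of tails $B_j$ --- which you justify correctly, as you do the eventual constancy of a convergent sequence with finite range in a Hausdorff space. Both arguments ultimately rest on the same two ingredients, Lemma~\ref{l:t2} and the closedness of principal upper sets under separate continuity, so the difference is one of economy rather than of substance; your version is arguably the cleaner of the two.
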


\begin{proof} Assuming that $X$ is not sequentially closed in $Y$, we can find a sequence $(x_n)_{n\in\w}$ of pairwise distinct points of $X$ that converges to a point $y\in Y\setminus X$. It follows that the infinite set $A=\{x_n\}_{n\in\w}\subset X$ converges to $y$. By Lemma~\ref{l:t2}, the set $A$ contains an infinite subset $B\subset A$ such that $y=\inf^* B\in X$. But this contradicts the choice of $y$.
\smallskip

To show that the partial order $P=\{(x,y)\in X\times X:xy=x\}$ of $X$ is sequentially closed in $Y\times Y$, fix any sequence $\{(x_n,y_n)\big\}_{n\in\w}\subset P$ that converges to a pair  $(x,y)\in Y\times Y$. We should prove that $(x,y)\in P$. Since $X$ is sequentially closed in $Y$, the limits $x,y$ of the sequences $(x_n)_{n\in\w}$ and $(y_n)_{n\in\w}$ belong to $X$. The separate continuity of the semigoup operation on the Hausdorff space $X$ implies that the sets ${\uparrow}x:=\{z\in Y:zx=x\}$ and ${\downarrow}y:=\{z\in Y:zy=z\}$ are closed in $Y$.

If the set $\{x_n\}_{n\in\w}$ is finite, then the convergence $x=\lim_{n\to\infty}x_n$ implies that the set $I:=\{n\in \w:x_n=x\}$ is infinite and then $y\in\cl_Y(\{y_n\}_{n\in I})\subset {\uparrow}x$ (as $x=x_n\le y_n$ for all $n\in I$) and hence $(x,y)\in P$. So, we can assume that the set $\{x_n\}_{n\in\w}$ is infinite. Applying Lemma~\ref{l:t2}, we can find an infinite set $\Omega\subset\w$ such that $x=\inf^*\{x_n\}_{n\in\Omega}$. 

If the set $\{y_n\}_{n\in\Omega}$ is finite, then the convergence $y=\lim_{n\to\infty}y_n$ implies that the set $J:=\{n\in \Omega:y_n=y\}$ is infinite and then $x\in\cl_X(\{x_n\}_{n\in J})\subset {\downarrow}y$ and $(x,y)\in P$.

So we can assume that the set $\{y_n\}_{n\in\Omega}$ is infinite. Applying Lemma~\ref{l:t2}, we can find an infinite set $\Lambda\subset\Omega$ such that $y=\inf^*\{y_n\}_{n\in\Lambda}$. Taking into account that $x_n\le y_n$ for all $n\in\w$, we see that $x=\inf^*\{x_n\}_{n\in\Omega}\le\inf^*\{x_n\}_{n\in\Lambda}\le \inf^*\{y_n\}_{n\in\Lambda}=y$ and hence $(x,y)\in P$.
\end{proof}

Theorem~\ref{t:t} and Lemma~\ref{l:t} imply

\begin{corollary}\label{c:t} Let $X$ be a subsemilattice of a Hausdorff semitopological semilattice $Y$ such that $X$ is ${\downarrow}\w$-complete and ${\uparrow}\w_1$-complete. If $X$ has countable tightness,  then $X$ is sequentially closed in $Y$ and the partial order $\{(x,y)\in X\times X:xy=x\}$ of $X$ is sequentially closed in $Y\times Y$.
\end{corollary}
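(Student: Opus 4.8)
The plan is to deduce the corollary directly from Theorem~\ref{t:t} and Lemma~\ref{l:t}, by checking that the hypotheses on $X$ supply exactly what those two results need. The guiding observation is that all three assumptions of Lemma~\ref{l:t} are available, so Lemma~\ref{l:t} forbids strictly increasing $\w_1$-sequences in $X$; since $\w_1\le\mathfrak t$, this also forbids $\mathfrak t$-sequences, which is precisely the extra hypothesis of Theorem~\ref{t:t}.

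First I would do the completeness bookkeeping. Every chain of cardinality at most $\w$ has cardinality at most $\w_1$, so the assumed ${\uparrow}\w_1$-completeness of $X$ entails ${\uparrow}\w$-completeness; combined with the assumed ${\downarrow}\w$-completeness, this shows that $X$ is ${\updownarrow}\w$-complete, which is the standing hypothesis of Theorem~\ref{t:t}.

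Next I would verify the three hypotheses of Lemma~\ref{l:t}. The ${\uparrow}\w_1$-completeness and the countable tightness of $X$ are given outright. For the remaining hypothesis, I would show that for every $x\in X$ the lower set ${\downarrow}x=\{z\in X:zx=z\}$ is closed in $X$, arguing exactly as in the proof of Theorem~\ref{t:t}: the shift $s_x\colon Y\to Y$, $s_x(z)=zx$, is continuous by separate continuity of the operation, so the map $z\mapsto(s_x(z),z)$ is continuous into $Y\times Y$; since $Y$ is Hausdorff its diagonal is closed, and $\{z\in Y:zx=z\}$ is the preimage of this diagonal, hence closed in $Y$. Intersecting with $X$ shows ${\downarrow}x$ is closed in the subspace $X$. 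Lemma~\ref{l:t} then applies and yields that $X$ contains no strictly increasing transfinite sequence of length $\w_1$.

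Finally I would pass from length $\w_1$ to length $\mathfrak t$. Because $\w_1\le\mathfrak t$, any strictly increasing transfinite sequence of length $\mathfrak t$ would restrict, on its first $\w_1$ terms, to a strictly increasing sequence of length $\w_1$; as no such sequence exists, $X$ contains no strictly increasing transfinite sequence of length $\mathfrak t$. Theorem~\ref{t:t} then gives at once that $X$ is sequentially closed in $Y$ and that its partial order is sequentially closed in $Y\times Y$, which completes the proof. I do not anticipate a genuine obstacle here: the only step requiring a little care is the closedness of the lower sets, which is where the Hausdorff property of $Y$ is used, while the reduction from $\mathfrak t$ to $\w_1$ is immediate once $\w_1\le\mathfrak t$ is recalled.
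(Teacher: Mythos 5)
Your proposal is correct and follows exactly the route the paper intends: the paper derives Corollary~\ref{c:t} by combining Theorem~\ref{t:t} with Lemma~\ref{l:t}, and you have correctly supplied the implicit verifications (that ${\uparrow}\w_1$-completeness gives ${\uparrow}\w$-completeness, that lower sets are closed via separate continuity and the Hausdorff property of $Y$, and that the absence of strictly increasing $\w_1$-sequences rules out $\mathfrak t$-sequences since $\w_1\le\mathfrak t$).
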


Taking into account that subspaces of sequential spaces have countable tightness \cite[1.7.3{c}]{Engelking1989}, we can see that Corollary~\ref{c:t} implies another corollary.

\begin{corollary}\label{c2} Let $X$ be subsemilattice of a sequential Hausdorff semitopological semilattice $Y$. If $X$ is ${\downarrow}\w$-complete and ${\uparrow}\w_1$-complete, then $X$ is closed in $Y$ and the partial order $\{(x,y)\in X\times X:xy=x\}$ of $X$ is sequentially closed in $Y\times Y$.
\end{corollary}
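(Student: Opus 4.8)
The plan is to obtain this statement as an immediate consequence of the already-established Corollary~\ref{c:t}, so essentially no new work beyond verifying that its hypotheses hold and exploiting the sequentiality of $Y$. Corollary~\ref{c:t} requires three things of $X$: that it be ${\downarrow}\w$-complete, ${\uparrow}\w_1$-complete, and of countable tightness. The first two are assumed outright in Corollary~\ref{c2}, so the only hypothesis that must be supplied is countable tightness of $X$.

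First I would recall the cited fact from Engelking \cite[1.7.3{c}]{Engelking1989} that every subspace of a sequential space has countable tightness. Since $X$ is a subspace of the sequential Hausdorff space $Y$, it follows at once that $X$ has countable tightness. With all three hypotheses now verified, Corollary~\ref{c:t} applies and yields both that $X$ is sequentially closed in $Y$ and that the partial order $\{(x,y)\in X\times X:xy=x\}$ is sequentially closed in $Y\times Y$. The latter conclusion is word-for-word the second assertion of Corollary~\ref{c2}, so it requires nothing further.

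The only genuinely new claim is the upgrade from ``$X$ is sequentially closed in $Y$'' to ``$X$ is closed in $Y$.'' This is where sequentiality of $Y$ enters a second, and more essential, time: by the very definition of a sequential space, every sequentially closed subset of $Y$ is closed. Applying this to the sequentially closed set $X$ gives that $X$ is closed in $Y$, which completes the argument.

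I do not anticipate any real obstacle here. All the substantive content lives in Theorem~\ref{t:t} and Lemma~\ref{l:t}, whose combination is precisely Corollary~\ref{c:t}; Corollary~\ref{c2} is a routine repackaging that merely invokes sequentiality of $Y$ twice—once through the countable-tightness of subspaces and once through the property that sequentially closed sets are closed.
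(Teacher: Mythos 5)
Your proposal is correct and matches the paper's own derivation: the paper likewise obtains Corollary~\ref{c2} from Corollary~\ref{c:t} by noting that subspaces of sequential spaces have countable tightness, with the upgrade from sequentially closed to closed following from the definition of sequentiality of $Y$. You have merely made explicit the two uses of sequentiality that the paper leaves implicit.
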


\begin{remark} The ${\downarrow}\w$-completeness of $X$ is essential in Corollaries~\ref{c:t} and \ref{c2}: by \cite{BBR2}, there exists a metrizable semitopological semilattice $X$ whose partial order is not closed in $X\times X$, and for every $x\in X$ the upper set ${\uparrow}x$ is finite.
\end{remark}

\section{The closedness of images of complete semilattices}\label{s:im}

In this section we apply the results of the preceding section to establish the (sequential) closedness of images of complete semilattices under continuous homomorphisms.

\begin{lemma}\label{l:im} Let $h:X\to Y$ be a surjective continuous homomorphism from a down-complete topologized semilattice $X$ to a Hausdorff semitopological  semilattice $Y$. Let  $\kappa$ be a cardinal. 
\begin{enumerate}
\item The topologized semilattice $Y$ is a down-complete.
\item If $X$ is ${\uparrow}\kappa$-complete, then $Y$ is a ${\uparrow}\kappa$-complete.
\item If $X$ contains no strictly increasing transfinite sequences of length $\kappa$, then $Y$ contains no strictly increasing transfinite sequences of length $\kappa$.
\end{enumerate}
\end{lemma}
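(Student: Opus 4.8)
The plan is to deduce all three items from a single \emph{chain-lifting} principle, combined with the completeness of $X$ and transported across $h$. Throughout I would use that a semilattice homomorphism is monotone: if $x\le x'$ in $X$ then $h(x)h(x')=h(xx')=h(x)$, so $h(x)\le h(x')$. I would also isolate two auxiliary facts about the target. First, in a Hausdorff semitopological semilattice the closure of a subsemilattice is again a subsemilattice (fixing one variable, separate continuity carries the subsemilattice into its closure, and iterating gives $\bar T\,\bar T\subseteq\bar T$). Second, a \emph{bound lying in the closure is extremal}: if $D\subseteq Y$ and $\ell\in\bar D$ satisfies $\ell\le d$ for all $d\in D$, then $\ell=\inf D$; dually, if $u\in\bar D$ satisfies $u\ge d$ for all $d\in D$, then $u=\sup D$. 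Indeed, for any lower bound $b$ of $D$ the shift $s_b\colon z\mapsto bz$ is continuous and constant equal to $b$ on $D$, hence $b\ell\in\overline{\{b\}}=\{b\}$, i.e. $b\le\ell$; the dual statement follows because $s_u$ is continuous and equals the identity on the dense subset $D$ of $\bar D$, so $s_u=\mathrm{id}$ on $\bar D$.

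The key consequence of down-completeness that I would extract is that \emph{every} non-empty subset $A\subseteq X$ has an infimum, located in $\overline{\langle A\rangle}$, where $\langle A\rangle$ is the subsemilattice generated by $A$. To prove this I would enumerate $A=\{a_\xi:\xi<\lambda\}$ and build a decreasing chain $(p_\xi)_{\xi<\lambda}$ by $p_0=a_0$, $p_{\xi+1}=p_\xi a_{\xi+1}$, and, at a limit ordinal $\xi$, $p_\xi=\big(\inf\{p_\eta:\eta<\xi\}\big)a_\xi$, the inner infimum existing by ${\downarrow}$-completeness applied to the decreasing chain $\{p_\eta:\eta<\xi\}$. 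A transfinite induction then gives $p_\xi\le a_\eta$ for all $\eta\le\xi$ and $p_\xi\in\overline{\langle A\rangle}$ (using that $\overline{\langle A\rangle}$ is a subsemilattice). Finally $\inf\{p_\xi:\xi<\lambda\}$ exists by down-completeness, lies in $\overline{\{p_\xi:\xi<\lambda\}}\subseteq\overline{\langle A\rangle}$, is a lower bound of $A$, and is the greatest one by a parallel induction; this element is $\inf A$.

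With this in hand I would lift chains. Fix a chain $D\subseteq Y$ and, using surjectivity, choose $x_d\in h^{-1}(d)$ for each $d\in D$. Put $\tilde x_d:=\inf\{x_e:e\in D,\ e\ge d\}$, which exists by the previous paragraph. If $d\le d'$ then $\{x_e:e\ge d\}\supseteq\{x_e:e\ge d'\}$, so $\tilde x_d\le\tilde x_{d'}$; thus $d\mapsto\tilde x_d$ is monotone and $C:=\{\tilde x_d:d\in D\}$ is a chain. Moreover $\tilde x_d\in\overline{\langle\{x_e:e\ge d\}\rangle}$, so, as $h$ is a continuous homomorphism, $h(\tilde x_d)\in\overline{\{e\in D:e\ge d\}}$ and $h(\tilde x_d)$ is a lower bound of $\{e\in D:e\ge d\}$; by the extremality fact $h(\tilde x_d)=\inf\{e\in D:e\ge d\}=d$. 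Hence $h|_C\colon C\to D$ is an order isomorphism and $|C|=|D|$. Now item (1) follows by taking $c:=\inf C\in\bar C$ (down-completeness), for then $h(c)\in\overline{h(C)}=\bar D$ is a lower bound of $D$, so $\inf D=h(c)\in\bar D$. Item (2) follows likewise: if $|D|\le\kappa$ then $|C|\le\kappa$, so $\sup C\in\bar C$ by ${\uparrow}\kappa$-completeness and $h(\sup C)\in\bar D$ is an upper bound of $D$, whence $\sup D=h(\sup C)\in\bar D$. Item (3) is immediate: a strictly increasing $(y_\alpha)_{\alpha<\kappa}$ in $Y$ lifts to the increasing family $(\tilde x_{y_\alpha})_{\alpha<\kappa}$ with $h(\tilde x_{y_\alpha})=y_\alpha$ pairwise distinct, hence strictly increasing of length $\kappa$ in $X$, contradicting the hypothesis on $X$.

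The main obstacle is the asymmetry between meets and joins: the operation of $X$ supplies infima of pairs and, through the recursion above, of arbitrary sets, but it supplies no joins, so suprema cannot be computed inside $X$ by an analogous ``dual'' construction. This is exactly why item (2) must be routed through the chain-lifting step: one first manufactures an order-isomorphic copy $C$ of $D$ inside $X$ using only infima, and only then invokes ${\uparrow}\kappa$-completeness of $X$ to produce $\sup C$. Consequently the delicate point of the whole argument is the lift itself, and in particular the identity $h(\tilde x_d)=d$, which depends on the extremality fact in $Y$ together with the precise location of the infimum $\tilde x_d$ inside the closure of the generated subsemilattice.
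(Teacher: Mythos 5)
Your overall strategy---lift a chain $D\subset Y$ to a chain $C\subset X$ on which $h$ restricts to an order isomorphism onto $D$, then push $\inf C$ or $\sup C$ forward and identify its image via the ``extremality'' observation in $Y$---is viable and close in spirit to the paper's proof, which lifts $d\in D$ to $\min h^{-1}({\uparrow}d)$ obtained from a maximal chain in the closed subsemilattice $h^{-1}({\uparrow}d)$. But your construction of the lift has a genuine gap. You locate $\inf A$ in $\overline{\langle A\rangle}$ for an arbitrary non-empty $A\subset X$ by invoking the fact that the closure of a subsemilattice is a subsemilattice, and you justify that fact using separate continuity of the operation. The hypothesis of the lemma, however, only makes $Y$ a Hausdorff \emph{semitopological} semilattice; $X$ is a bare \emph{topologized} semilattice, with no continuity of the multiplication assumed at all. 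Hence there is no reason for $\overline{\langle A\rangle}$ to be a subsemilattice of $X$, and both the successor step $p_{\xi+1}=p_\xi a_{\xi+1}$ (with $p_\xi$ merely a closure point) and the limit step $p_\xi=\big(\inf\{p_\eta:\eta<\xi\}\big)a_\xi$ of your recursion can leave $\overline{\langle A\rangle}$. The purely order-theoretic conclusion that $\inf A$ exists survives, but its location does not---and the location is exactly what you use to get $h(\tilde x_d)\in\overline{\{e\in D:e\ge d\}}$ and hence $h(\tilde x_d)=d$, the linchpin of all three items. (For countable $A$ there are no limit stages and every $p_n$ lies in $\langle A\rangle$ itself, which is the paper's Lemma~\ref{l:down}; the difficulty is genuinely about uncountable chains, which this lemma must handle.)

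The repair is to replace $\overline{\langle A\rangle}$ by a set that is a subsemilattice for purely algebraic reasons and closed for purely topological reasons, so that no interaction of topology and algebra in $X$ is required: run the recursion for $A=\{x_e:e\ge d\}$ inside $S_d:=h^{-1}({\uparrow}d)$, which is a subsemilattice because $h$ is a homomorphism and ${\uparrow}d$ is a subsemilattice of $Y$, and is closed because $h$ is continuous and ${\uparrow}d=\{z\in Y:zd=d\}$ is closed in the Hausdorff semitopological semilattice $Y$. All $p_\xi$ then stay in $S_d$ (successors by the subsemilattice property, limits because $\inf\{p_\eta:\eta<\xi\}\in\overline{\{p_\eta:\eta<\xi\}}\subset \overline{S}_d=S_d$), whence $\tilde x_d\in S_d$ and $d\le h(\tilde x_d)$ directly; this is precisely the device the paper uses. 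A second, smaller slip: in your dual extremality fact you argue with the shift $s_u$, which only shows that $u$ is an upper bound of $\bar D$; to conclude $u=\sup D$ you must take an arbitrary upper bound $b$ of $D$ and apply the ``agreement on a dense set'' argument to $s_b$, obtaining $bu=u$, i.e.\ $u\le b$. With these two corrections your argument goes through and is essentially equivalent to the paper's.
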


\begin{proof} For every $y\in Y$ consider the closed subsemilattice $S_y:=h^{-1}({\uparrow}y)$ in $X$. Let $M_y$ be a maximal chain in $S_y$. By the down-completeness of $X$, the chain $M_y$ has $\inf M_y\in\overline{M}_y\subset \overline S_y=S_y$. We claim that $s_y:=\inf M_y$ is the smallest element of $S_y$. In the opposite case, there would exist an element $x\in S_y$ such that $s_y\not\le x$ and hence $xs_y<s_y$. Then $\{xs_y\}\cup M_y$ is a chain in $S_y$, properly containing the maximal chain $M_y$, which is a desired contradiction showing that $s_y$ is the smallest element $\min S_y$ of the semilattice $S_y$. It follows from $h(S_y)={\uparrow}y$ that $h(s_y)\in h(S_y)={\uparrow}y$ and hence $y\le h(s_y)$. On the other hand, for any $x\in h^{-1}(y)$ we get $s_y\le x$ and hence $h(s_y)\le h(x)=y$ and finally $h(s_y)=y$. It is clear that for any $x\le y$ in $C$, we get $S_x\subset S_y$ and hence $\min S_x\le\min S_y$.
\smallskip 

1. To prove that $Y=h(X)$ is down-complete, we should show that  any non-empty chain $C\subset Y$ has $\inf C\in \bar C$. It follows that for any $x\le y$ in $C$, we get $\min S_x\le\min S_y$, which means that $D:=\{\min S_x\}_{x\in C}$ is a chain in $X$. By the down-compactness of $X$, the chain $D$ has $\inf D\in\bar D$.

The continuity of the homomorphism $h$ ensures that $h(\inf D)\in h(\overline D)\subset \overline{h(D)}=\overline{C}$.  It remains to check that $h(\inf D)=\inf C$.
Taking into account that $h$ is a semilattice  homomorphism, we can show that $h(\inf D)$ is a lower bound of the set $C=h(D)$ in $Y=h(X)$. For any other lower bound $b\in h(X)$ of $C$, we see that $C\subset{\uparrow} b$, $D\subset h^{-1}({\uparrow}b)=S_b$ and hence $\min S_b\le \inf D$, which implies that $b=h(\min S_b)\le h(\inf D)$. So, $\inf C=h(\inf D)\in\bar C$. 
\smallskip

2. Assuming that $X$ is ${\uparrow}\kappa$-complete, we shall check that the topologized semilattice $Y$ is ${\uparrow}\kappa$-compact. Given any non-empty chain $C\subset Y$ of cardinality $|C|\le\kappa$, we should show that $C$ has $\sup C\in \bar C$. It is clear that for any $x\le y$ in $C$, we get $\min S_x\le\min S_x$, which means that $D:=\{\min S_x\}_{x\in C}$ is a chain in $X$ of cardinality $|D|\le|C|\le\kappa$. Since $X$ is ${\uparrow}\kappa$-complete, the chain $D$ has $\sup D\in\bar D$.

The continuity of the homomorphism $h$ ensures that $h(\sup D)\in h(\overline D)\subset \overline{h(D)}=\overline{C}$.  It remains to check that $h(\sup D)=\sup C$.
Taking into account that $h$ is a semilattice  homomorphism, we can show that $h(\sup D)$ is an upper bound for the set $C=h(D)$ in $Y=h(X)$. For any other upper bound $b\in h(X)$ of $b$, we see that $C\subset{\downarrow} b$, $D\subset{\downarrow}\min S_b$ and hence $\sup D\le \min S_b$, which implies that  $h(\sup D)\le h(\min S_b)=b$. So, $\sup C=h(\sup D)\in\bar C$. 
\smallskip

3. Assuming that $(y_\alpha)_{\alpha\in\kappa}$ is a strictly increasing transfinite sequence of length $\kappa$ in $Y$, we can see that $(\min S_{y_\alpha})_{\alpha\in\kappa}$ is a strictly increasing transfinite sequence of length $\kappa$ in $X$.
\end{proof}

Lemma~\ref{l:im} and Theorem~\ref{t:t} imply:

\begin{corollary} Let $X$ be a down-complete ${\uparrow}\w$-complete topologized semilattice containing no strictly increasing transfinite sequences of length $\mathfrak t$.
For any continuous homomorphism $h:X\to Y$ to a Hausdorff semitopological semilattice $Y$, the image $h(X)$ is sequentially closed in $Y$.
\end{corollary}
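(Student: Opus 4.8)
The plan is to reduce the statement to the surjective case covered by Lemma~\ref{l:im} and then apply the sequential-closedness criterion of Theorem~\ref{t:t}. First I would replace the (possibly non-surjective) homomorphism $h:X\to Y$ by its corestriction onto the image $Z:=h(X)$, viewed as a subspace of $Y$. Since $h$ is a semilattice homomorphism, $Z$ is a subsemilattice of $Y$; being a subspace of the Hausdorff semitopological semilattice $Y$, it is itself a Hausdorff semitopological semilattice, because the Hausdorff property and the separate continuity of the operation are both inherited by subspaces equipped with the restricted operation. Thus $h:X\to Z$ is a \emph{surjective} continuous homomorphism onto a Hausdorff semitopological semilattice, so Lemma~\ref{l:im} applies to it (with codomain $Z$ in place of the ambient $Y$ of that lemma).

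Next I would transport the hypotheses on $X$ to the image $Z$ via Lemma~\ref{l:im}. By part~(1) the down-completeness of $X$ makes $Z$ down-complete, in particular ${\downarrow}\w$-complete; by part~(2) applied with $\kappa=\w$, the ${\uparrow}\w$-completeness of $X$ makes $Z$ likewise ${\uparrow}\w$-complete. Combining these two facts shows that $Z$ is ${\updownarrow}\w$-complete. Finally, by part~(3) applied with $\kappa=\mathfrak t$, the absence of strictly increasing transfinite sequences of length $\mathfrak t$ in $X$ guarantees the same for $Z$. It is worth noting that all these completeness conditions for $Z$ are measured intrinsically, in the subspace topology of $Z$, which is exactly the setting required by Theorem~\ref{t:t}.

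With these properties in hand, the conclusion is immediate from Theorem~\ref{t:t}: taking the ambient semilattice to be $Y$ and its distinguished subsemilattice to be $Z$, we have a ${\updownarrow}\w$-complete subsemilattice $Z$ of the Hausdorff semitopological semilattice $Y$ containing no strictly increasing transfinite sequence of length $\mathfrak t$, whence $Z=h(X)$ is sequentially closed in $Y$, as required. I do not expect a genuine obstacle here, since Lemma~\ref{l:im} and Theorem~\ref{t:t} carry out all the substantive work; the only point needing explicit justification is the passage to the image $Z$ together with the verification that $Z$ inherits the structure of a Hausdorff semitopological semilattice, which is routine.
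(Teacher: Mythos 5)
Your proposal is correct and follows exactly the route the paper intends: the paper derives this corollary by combining Lemma~\ref{l:im} (to transfer down-completeness, ${\uparrow}\w$-completeness, and the absence of strictly increasing transfinite sequences of length $\mathfrak t$ to the image $h(X)$, viewed as a subsemilattice of $Y$) with Theorem~\ref{t:t}. Your explicit corestriction to $Z=h(X)$ and the check that $Z$ inherits the Hausdorff semitopological semilattice structure are the right (routine) details to supply.
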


Lemma~\ref{l:im} and Corollary~\ref{c:t} imply:

\begin{theorem}\label{t:im-t} Let $X$ be a countably tight down-complete ${\uparrow}\w_1$-complete topologized semilattice. For any continuous homomorphism $h:X\to Y$ to a Hausdorff semitopological semilattice $Y$, the image $h(X)$ is sequentially closed in $Y$.
\end{theorem}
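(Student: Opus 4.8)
The plan is to reduce to the surjective case and then feed the image into Corollary~\ref{c:t}, viewed as a subsemilattice of $Y$. First I would replace the codomain by $Z:=h(X)\subseteq Y$ with the subspace topology; as a subsemilattice of a Hausdorff semitopological semilattice, $Z$ is itself a Hausdorff semitopological semilattice, and $h:X\to Z$ is a continuous \emph{surjective} homomorphism. Since $X$ is down-complete, Lemma~\ref{l:im}(1) gives that $Z$ is down-complete, hence ${\downarrow}\w$-complete, and Lemma~\ref{l:im}(2) with $\kappa=\w_1$ gives that $Z$ is ${\uparrow}\w_1$-complete. Thus $Z$ already satisfies the two completeness hypotheses of Corollary~\ref{c:t}.

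The remaining hypothesis of Corollary~\ref{c:t} is that $Z$ have countable tightness, and this is the step I expect to be the main obstacle: countable tightness is not preserved by arbitrary continuous surjections, so it cannot simply be transported from $X$ to $Z=h(X)$. What Corollary~\ref{c:t} really extracts, through Lemma~\ref{l:t}, is that $Z$ contains no strictly increasing transfinite sequence of length $\w_1$; I would establish \emph{this} property directly by a lifting argument rather than through countable tightness of $Z$ itself.

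Suppose, for contradiction, that $(y_\alpha)_{\alpha<\w_1}$ is a strictly increasing sequence in $Z$. Using the smallest elements $\min S_{y_\alpha}$ of the closed subsemilattices $S_{y_\alpha}=h^{-1}({\uparrow}y_\alpha)$ furnished by the proof of Lemma~\ref{l:im}, I would lift this to a sequence $\tilde x_\alpha:=\min S_{y_\alpha}$ in $X$ with $h(\tilde x_\alpha)=y_\alpha$; its monotonicity and strict increase are exactly the content of Lemma~\ref{l:im}(3). By ${\uparrow}\w_1$-completeness of $X$ the supremum $c:=\sup_X\{\tilde x_\alpha:\alpha<\w_1\}$ exists and lies in $\overline{\{\tilde x_\alpha:\alpha<\w_1\}}$, and $c$ equals none of the $\tilde x_\alpha$. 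Now I would invoke the countable tightness of $X$: it yields a countable subset of $\{\tilde x_\alpha\}$, hence an ordinal $\beta<\w_1$, with $c\in\cl_X\{\tilde x_\alpha:\alpha<\beta\}$. Applying the continuous homomorphism $h$ together with the computation in Lemma~\ref{l:im}(2), which gives $h(c)=\sup_Z\{y_\alpha:\alpha<\w_1\}$, I obtain $h(c)\in\cl_Y\{y_\alpha:\alpha<\beta\}\subseteq{\downarrow}y_\beta$, where the last set is closed in $Y$ precisely because $Y$ is a Hausdorff semitopological semilattice. Hence $h(c)\le y_\beta$, whereas $h(c)=\sup_Z\{y_\alpha\}\ge y_{\beta+1}>y_\beta$, a contradiction.

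With $Z$ shown to contain no strictly increasing sequence of length $\w_1$, and hence (since $\w_1\le\mathfrak t$) none of length $\mathfrak t$, I would conclude by applying Theorem~\ref{t:t} to the ${\updownarrow}\w$-complete subsemilattice $Z$ of $Y$: it yields that $Z=h(X)$ is sequentially closed in $Y$, as required. Once the hypotheses are in place, this final passage is exactly Corollary~\ref{c:t}, so the argument fits the announced scheme ``Lemma~\ref{l:im} and Corollary~\ref{c:t} imply Theorem~\ref{t:im-t}'', with the lifting argument supplying the tightness-type input that Corollary~\ref{c:t} demands of the image.
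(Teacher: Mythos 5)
Your proof is correct, and it actually supplies a step that the paper's own derivation leaves implicit (arguably leaves as a gap): the paper proves Theorem~\ref{t:im-t} by the single line ``Lemma~\ref{l:im} and Corollary~\ref{c:t} imply,'' but, as you rightly observe, Corollary~\ref{c:t} cannot be applied verbatim to $Z=h(X)$ because countable tightness is not preserved by continuous surjections, and Lemma~\ref{l:t} cannot be applied directly to the domain $X$ because lower sets need not be closed in an abstract topologized semilattice. Your lifting argument is exactly the right repair: you rerun the proof of Lemma~\ref{l:t} upstairs in $X$, where countable tightness and ${\uparrow}\w_1$-completeness live, while borrowing the closedness of ${\downarrow}y_\beta$ downstairs in $Y$, where Hausdorffness and separate continuity live; and the lift $\tilde x_\alpha=\min S_{y_\alpha}$ (rather than arbitrary preimages) is needed precisely so that the images $h(\tilde x_\alpha)=y_\alpha$ stay \emph{strictly} increasing, which is what produces the contradiction $h(c)\le y_\beta<y_{\beta+1}\le h(c)$. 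All the individual steps check: $S_{y_\alpha}=h^{-1}({\uparrow}y_\alpha)$ is a nonempty closed subsemilattice with a minimum by down-completeness of $X$ (this is the computation in the proof of Lemma~\ref{l:im}); the supremum $c$ exists and is not attained; countable tightness of $X$ bounds the relevant indices by some $\beta<\w_1$; and you only need $h(c)\ge h(\tilde x_{\beta+1})=y_{\beta+1}$ from monotonicity of $h$, not the full identity $h(c)=\sup\{y_\alpha\}$. The conclusion via Lemma~\ref{l:im}(1),(2) and Theorem~\ref{t:t} (equivalently, the tightness-free part of Corollary~\ref{c:t}) is then exactly the paper's intended route, so your argument is best described as the paper's scheme with the missing transfer of the ``no strictly increasing $\w_1$-sequence'' property made explicit and correct.
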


\begin{corollary}\label{c:im-t} For any continuous homomorphism $h:X\to Y$ from a  down-complete ${\uparrow}\w_1$-complete topologized semilattice $X$ to a Hausdorff semitopological semilattice $Y$ of countable tightness, the image $h(X)$ is sequentially closed in $Y$.
\end{corollary}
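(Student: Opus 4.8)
The plan is to reduce the statement to Corollary~\ref{c:t} by replacing the codomain $Y$ with the image of $h$ and then transporting the completeness hypotheses from $X$ across $h$. Since $h$ need not be surjective onto $Y$, I would first set $Z:=h(X)$ and equip it with the subspace topology inherited from $Y$. As $h$ is a homomorphism, $Z$ is a subsemilattice of $Y$; being a subspace of the Hausdorff space $Y$ it is Hausdorff, and the separate continuity of the multiplication on $Y$ restricts to separate continuity on $Z$, so $Z$ is a Hausdorff semitopological semilattice. Moreover $h:X\to Z$ is now a \emph{surjective} continuous homomorphism, which is exactly the setting required by Lemma~\ref{l:im}.

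Next I would transport completeness. Applying Lemma~\ref{l:im}(1) to $h:X\to Z$ and using that $X$ is down-complete shows that $Z$ is down-complete, hence in particular ${\downarrow}\w$-complete. Applying Lemma~\ref{l:im}(2) with $\kappa=\w_1$ and using that $X$ is ${\uparrow}\w_1$-complete shows that $Z$ is ${\uparrow}\w_1$-complete. Thus $Z$ inherits precisely the two completeness properties demanded by Corollary~\ref{c:t}.

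It remains to supply the tightness hypothesis, and here I would use that countable tightness passes to arbitrary subspaces: if $a\in\overline{A}$ for some $A\subset Z$, then $a\in\cl_Y(A)$, so the countable tightness of $Y$ provides a countable $B\subset A$ with $a\in\cl_Y(B)$, and since $a\in Z$ this gives $a\in\overline{B}$ in $Z$. Hence $Z$ has countable tightness. Now $Z$ is a ${\downarrow}\w$-complete, ${\uparrow}\w_1$-complete, countably tight subsemilattice of the Hausdorff semitopological semilattice $Y$, so Corollary~\ref{c:t} applies and shows that $Z=h(X)$ is sequentially closed in $Y$, as required.

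I do not expect a genuine obstacle: the statement is essentially an assembly of Lemma~\ref{l:im} and Corollary~\ref{c:t}. The only points requiring care are the corestriction to $Z=h(X)$ (so that the surjectivity assumption of Lemma~\ref{l:im} holds) and the observation that countable tightness is hereditary, which is exactly what lets the tightness assumption migrate from $Y$ onto the subsemilattice $Z$. Alternatively, once $Z$ is known to be down-complete, ${\uparrow}\w_1$-complete and countably tight, one may instead apply Theorem~\ref{t:im-t} to the inclusion homomorphism $Z\hookrightarrow Y$ to reach the same conclusion.
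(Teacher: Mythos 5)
Your proof is correct and matches the paper's intended derivation: the paper obtains Corollary~\ref{c:im-t} exactly by corestricting $h$ to its image, transporting down-completeness and ${\uparrow}\w_1$-completeness via Lemma~\ref{l:im}, noting that countable tightness is hereditary so $h(X)$ inherits it from $Y$, and then invoking Corollary~\ref{c:t} (equivalently, Theorem~\ref{t:im-t} applied to the inclusion). No gaps; the care you take with surjectivity and heredity of tightness is precisely what the step requires.
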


Since  sequential spaces have countable tightness \cite[1.7.13(c)]{Engelking1989}, Corollary~\ref{c:im-t} implies Theorem~\ref{t:main} announced in the introduction.

\end{document}